\definecolor{amaranth}{rgb}{0.9, 0.17, 0.31}
\definecolor{ao}{rgb}{0.0, 0.0, 1.0}
\definecolor{ao(english)}{rgb}{0.0, 0.5, 0.0}
\definecolor{deepmagenta}{rgb}{0.8, 0.0, 0.8}
\def\XXint#1#2#3{{\setbox0=\hbox{$#1{#2#3}{\int}$}
     \vcenter{\hbox{$#2#3$}}\kern-.5\wd0}}
\theoremstyle{plain}
\newtheorem{theorem}{Theorem}[section]
\newtheorem{proposition}{Proposition}[section]
\newtheorem{remark}{\bf Remark}[section]
\theoremstyle{definition}
\newcommand{\C}{\mathbb{C}}
\newcommand{\dd}{\, \mathrm{d}}
\newcommand{\eps}{\varepsilon}
\renewcommand{\epsilon}{\varepsilon}
\newcommand{\e}{\mathrm{e}}
\newcommand{\ii}{\mathrm{i}}
\renewcommand{\phi}{\varphi}
\newcommand{\set}[2]{\left\{#1: \, #2\right\}}
\DeclareMathOperator{\dom}{dom}
\DeclareMathOperator{\im}{Im}
\DeclareMathOperator{\re}{Re}
\dedicatory{To Dima Yafaev}
\title
[Eigenvalues of non-selfadjoint functional difference operators]
 {Eigenvalues of non-selfadjoint functional difference operators}
\author{Alexei Ilyin}
\address{Alexei Ilyin:  Keldysh Institute of Applied Mathematics;
ilyin@keldysh.ru}
\author{Ari Laptev}
\address{Ari Laptev:  Department of Mathematics, Imperial College London,
 and  Sirius University of Science and Technology  Olimpiyskiy ave. b.1,
 Sirius, Krasnodar region, Russia, 354340;
a.laptev@imperial.ac.uk}
\author{Lukas Schimmer}
\address{Lukas Schimmer:  Department of Mathematical Sciences, Loughborough University, Loughborough,
Leicestershire, LE11 3TU, United Kingdom
l.schimmer@lboro.ac.uk}
\author{Anna Zernova}
\address{Anna Zernova: Sirius University of Science and Technology  Olimpiyskiy ave. b.1,
 Sirius, Krasnodar region, Russia, 354340;
 yakimenkoanyuta@gmail.com }
\begin{document}

\maketitle

\begin{quote}
{\normalfont\fontsize{8}{10}
\selectfont{\bfseries  Abstract.}
Using the well known approach developed in the papers of B.Davies and his co-authors we obtain inequalities 
for the location of possible complex eigenvalues of non-selfadjoint functional difference operators. When studying the sharpness of the main result we discovered that complex potentials can create resonances.
}
\end{quote}

\setcounter{equation}{0}
\section{Introduction}\label{Sec:1}

\noindent
In this paper we are concerned with possible locations of eigenvalues of non-selfadjoint functional difference operators with complex-valued potentials.

\noindent
Let $P$ be the self-adjoint quantum mechanical momentum operator on $L^2(\Bbb R)$, i.e.~$P=\ii \frac{d}{dx}$ and for $b>0$ denote by $U(b)$ the Weyl operator $U(b)=\exp(-bP)$. 
By using the Fourier transform
\begin{align*}
\widehat{\psi}(k)=(\mathcal{F}\psi)(k)=\int_{\Bbb R} \e^{-2\pi \ii kx}\psi(x)\dd x
\end{align*}
we can describe the domain of $U(b)$ as 
\begin{align*}
\dom(U(b))=\set{\psi\in L^2(\Bbb R)}{\e^{-2\pi b k}\widehat{\psi}(k)\in L^2(\Bbb R)}.
\end{align*}
This set consists of those functions $\psi(x)$ that admit an analytic continuation to the strip $\set{z = x+\ii y\in\Bbb C}{0<y <b}$ such that $\psi(x + \ii y) \in L^2(\Bbb R)$ for all $0\leq y < b$ and there is a limit $\psi(x + \ii b -  \ii 0) = \lim_{\eps\to 0^+}\psi (x + \ii b - \ii \eps)$ in the sense of convergence in $L^2(\Bbb R)$, which we will denote simply by 
$\psi(x+ \ii b)$. The domain of the inverse operator $U^{-1}(b)$ can be characterised similarly. 

\noindent
For $b>0$ we define the operator $W_0(b)=U(b)+U(b)^{-1}=2\cosh(bP)$ on the domain
\begin{align*}
\dom(W_0(b))=\set{\psi\in L^2(\Bbb R)}{2\cosh(2\pi bk)\widehat{\psi}(k)\in L^2(\Bbb R)}.
\end{align*}
The operator $W_0(b)$ is self-adjoint and unitarily equivalent to the multiplication operator $2\cosh(2\pi b k)$ in the Fourier space. Its spectrum is thus absolutely continuous covering the interval
$[2,\infty)$ doubly.

\noindent
In this paper our aim is to obtain an estimate for complex eigenvalues of the operator 
\begin{equation}\label{W_V}
W_V(b) = W_0(b) - V,
\end{equation}
where the potential $V$ is a complex-valued function. 

\medskip
\noindent
In order to describe our result, we first assume that $V\in L^1(\Bbb R)$ is real-valued.  The scalar inequality 
$2\cosh(2\pi b k)-2\ge (2\pi b k)^2$ implies the operator inequality
\begin{align}\label{WH}
W_0(b)-2\ge-b^2\frac{d^2}{dx^2}
\end{align}
on $\dom(W_0(b))$. By Sobolev's inequality, we can conclude that the operator  \eqref{W_V}
is bounded from below on the common domain of  $W_0(b)$ and $V$. 
We can thus consider its Friedrichs extension, which we continue to denote by $W_V(b)$.  
By applying Weyl's theorem (in a version for quadratic forms) and Rellich's lemma together with the fact that the form domain of $W_0(b)$ is continuously embedded in $H^1(\Bbb R)$ we conclude that the spectrum of $W_V(b)$ consists of essential spectrum $[2,\infty)$ and discrete finite-multiplicity eigenvalues  below. Details of this argument in the similar case of a Schr{\"o}dinger operator can be found in the book \cite{FLW}.

\medskip
\noindent
Any eigenvalue $\lambda$ of the operator \eqref{W_V} with real-valued $V$ can be written as $\lambda = -2\cos(\omega)$,  with $\omega\in [0,\pi)$ for $\lambda\in[-2,2]$ and  $\omega\in \ii\,[0,\infty)$ for $\lambda\le -2$. Under the condition that all eigenvalues $\lambda_j=-2\cos(\omega_j)$ are larger than or equal to $-2$, the authors of \cite{LSch} proved a Lieb--Thirring inequality 
\begin{equation*}\label{main_real}
\sum_{j\ge 1}\frac{\sin(\omega_j)}{\omega_j} \le \frac{1}{2\pi b} \int_{\Bbb R} |V(x)|\dd x.
\end{equation*}
As discussed in  \cite[Remark 1.2]{LSch}, the proof in general does not apply if there are multiple eigenvalues below $-2$. However, in the special case that single one of the eigenvalues is below $-2$ the proof remains applicable. Furthermore, it can also be used to establish that any real eigenvalue $\lambda=-2\cos(\omega)$, regardless of whether it lies above or below $-2$, must satisfy
\begin{equation}\label{main_real}
\frac{\sin(\omega)}{\omega} \le \frac{1}{2\pi b} \int_{\Bbb R} |V(x)|\dd x.
\end{equation}
The constant $\frac{1}{2\pi b}$in this inequality is sharp and attained if $V (x) = c\delta(x)$, $c > 0$. 

\medskip
\noindent
In recent years there has been an increasing interest in eigenvalue estimates for complex-valued potentials. The authors in \cite{AAD} developed an elegant observation that allows to locate complex eigenvalues for Schr\"odinger operators with complex-valued potentials. Such an approach and its generalisations were used in \cite{FLS}, \cite{CLT}, \cite{Fr1}.
Further development of estimates of complex eigenvalues for Schr\"odinger operators were obtained in \cite{Fr2}, \cite{LSaf}, \cite{BC}, \cite{FS}, \cite{Saf} and many others.

\medskip
\noindent 
It turns out that the inequality \eqref{main_real} can be generalised to the non-selfadjoint case. 

\noindent 
Let  $\Bbb R_+ = [0,\infty)$ and $\Bbb R_-= (-\infty,0]$. Denote by 
\begin{equation}\label{Omeg}
\Omega = \{\omega\in \Bbb C:\, {\rm Re}\, \omega\in[0,\pi);\, \, {\rm Im}\, \omega \in \Bbb R\}
\end{equation}
and 
\begin{equation}\label{Omeg_pm}
\Omega_\pm = \{\omega\in \Bbb C:\, {\rm Re}\, \omega\in[0,\pi);\, \, {\rm Im}\, \omega \in \Bbb R_\pm\}
\end{equation}
Then the mapping $\omega \mapsto \lambda(\omega) = -2\cos(\omega)$ transfers $\Omega$ to $\Bbb C \setminus [2,\infty)$ and $\Omega_\pm$ to $ \Bbb C_\pm\setminus [2,\infty)$, where $\Bbb C_\pm=\{ z\in\Bbb C:\,{\rm Im} \, z \in \Bbb R_\pm\}$.

\medskip
\noindent
Our main result is the following. 

%------------------
\begin{theorem}\label{1}
Let $V\in L^1(\Bbb R)$ be a complex-valued potential. Then the eigenvalues $\lambda\in \Bbb C\setminus [2,\infty)$ of the operator $W_V(b)$ satisfy the inequality 
\begin{equation}\label{main}
\left| \frac{\sin(\omega)}{\omega}\right| \le \frac{1}{2\pi b} \int_{\Bbb R} |V(x)|\dd x,
\end{equation}
where $\lambda = - 2\cos(\omega)$ and where $ \omega \in \Omega$.

\noindent
The constant in this inequality is sharp in the sense that there are potentials
$V$ such that inequality \eqref{main} becomes an equality.
\end{theorem}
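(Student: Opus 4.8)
The plan is to run the non-selfadjoint Birman--Schwinger argument of Abramov--Aslanyan--Davies \cite{AAD}, reducing the existence of an eigenvalue to an operator-norm bound on a Birman--Schwinger operator, and then to reduce that bound to the pointwise size of the convolution kernel of $(W_0(b)-\lambda)^{-1}$.

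First I would factorise $V = |V|^{1/2}(\sgn V)|V|^{1/2}$ with $\sgn V = V/|V|$. If $\psi$ is an eigenfunction of $W_V(b)$ with eigenvalue $\lambda\in\Bbb C\setminus[2,\infty)$, then $\phi := |V|^{1/2}\psi\in L^2(\Bbb R)$ (using $\psi\in H^1(\Bbb R)\hookrightarrow L^\infty(\Bbb R)$ and $|V|^{1/2}\in L^2(\Bbb R)$), and applying $(W_0(b)-\lambda)^{-1}$ to $(W_0(b)-\lambda)\psi = V\psi$ and multiplying by $|V|^{1/2}$ gives the Birman--Schwinger identity $\phi = K(\lambda)\phi$ with $K(\lambda) = |V|^{1/2}(W_0(b)-\lambda)^{-1}(\sgn V)|V|^{1/2}$. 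The resolvent exists since $\lambda\notin[2,\infty)=\spec(W_0(b))$ and is, in Fourier variables, multiplication by $(2\cosh(2\pi bk)-\lambda)^{-1}$; hence $K(\lambda)$ has integral kernel $|V(x)|^{1/2}G_\lambda(x-y)(\sgn V(y))|V(y)|^{1/2}$, where $G_\lambda$ denotes the convolution kernel of $(W_0(b)-\lambda)^{-1}$. Pairing $\phi = K(\lambda)\phi$ with $\phi$, then using the triangle inequality and Cauchy--Schwarz,
\[
\norm{\phi}^2 = \bigl|\sclp{\phi,K(\lambda)\phi}\bigr| \le \sup_{z\in\Bbb R}|G_\lambda(z)|\Bigl(\int_{\Bbb R}|V(x)|^{1/2}|\phi(x)|\dd x\Bigr)^2 \le \sup_{z\in\Bbb R}|G_\lambda(z)|\int_{\Bbb R}|V(x)|\dd x\,\norm{\phi}^2,
\]
and cancelling $\norm{\phi}^2$ leaves $1\le\sup_{z\in\Bbb R}|G_\lambda(z)|\int_{\Bbb R}|V(x)|\dd x$.

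Next I would compute $G_\lambda$ in closed form. With $\lambda=-2\cos\omega$ and the substitution $s=2\pi bk$, $G_\lambda(x)=\frac{1}{4\pi b}\int_{\Bbb R}\frac{\e^{\ii sx/b}}{\cosh s+\cos\omega}\dd s$. Closing the contour in the appropriate half-plane and summing the residues at the simple poles $s=\ii(\pi-\omega)+2\pi\ii n$ and $s=\ii(\pi+\omega)+2\pi\ii n$, $n\ge0$ (where $\sinh s=\ii\sin\omega$ and $\sinh s=-\ii\sin\omega$ respectively), a geometric series sums to
\[
G_\lambda(x) = \frac{1}{2b\sin\omega}\,\frac{\sinh(\omega|x|/b)}{\sinh(\pi|x|/b)}, \qquad G_\lambda(0) = \frac{1}{2\pi b}\,\frac{\omega}{\sin\omega}.
\]
The crux, and the step I expect to be the main obstacle, is to prove that $\sup_{z\in\Bbb R}|G_\lambda(z)| = |G_\lambda(0)|$: for complex $\lambda$ the kernel $G_\lambda$ is genuinely complex-valued, so the positive-definiteness that trivialises the self-adjoint case is unavailable. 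Writing $\omega=\alpha+\ii\beta$ with $\alpha\in[0,\pi)$, $\beta\in\Bbb R$ and $u=|x|/b$, the identity $|\sinh(\omega u)|^2=\sinh^2(\alpha u)+\sin^2(\beta u)$ gives
\[
|G_\lambda(x)|^2 = \frac{1}{4b^2|\sin\omega|^2}\cdot\frac{\sinh^2(\alpha u)+\sin^2(\beta u)}{\sinh^2(\pi u)}.
\]
Two elementary bounds then close the argument: convexity of $t\mapsto\sinh(tu)$ with $\sinh 0=0$ yields $\sinh(\alpha u)\le\frac{\alpha}{\pi}\sinh(\pi u)$ for $\alpha\in[0,\pi]$, while $|\sin(\beta u)|\le|\beta|u\le\frac{|\beta|}{\pi}\sinh(\pi u)$. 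Squaring and adding gives $\sinh^2(\alpha u)+\sin^2(\beta u)\le\frac{\alpha^2+\beta^2}{\pi^2}\sinh^2(\pi u)=\frac{|\omega|^2}{\pi^2}\sinh^2(\pi u)$, hence $|G_\lambda(x)|\le\frac{|\omega|}{2\pi b|\sin\omega|}=|G_\lambda(0)|$. Inserting this into the Birman--Schwinger bound yields $1\le\frac{1}{2\pi b}\bigl|\frac{\omega}{\sin\omega}\bigr|\int_{\Bbb R}|V(x)|\dd x$, which rearranges to \eqref{main}.

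Finally, for sharpness I would test a $\delta$-type interaction $V=c\,\delta$, $c>0$: the Birman--Schwinger identity collapses to the scalar equation $1=c\,G_\lambda(0)=\frac{c}{2\pi b}\frac{\omega}{\sin\omega}$, so its solution $\omega$ turns \eqref{main} into an equality with $\int_{\Bbb R}|V|=c$; approximating $\delta$ by concentrating $L^1$ potentials shows the constant $\frac{1}{2\pi b}$ cannot be improved. The appearance of genuinely complex solutions $\omega$ of this scalar equation for complex $c$ is the resonance phenomenon mentioned in the abstract.
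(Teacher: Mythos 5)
Your argument is correct and follows the same overall route as the paper: the Abramov--Aslanyan--Davies factorisation $V=|V|^{1/2}(\sgn V)|V|^{1/2}$ and the resulting Birman--Schwinger bound $1\le\sup_{x}|G_\lambda(x)|\,\|V\|_{L^1}$, the closed form \eqref{rezolvent} for the resolvent kernel, the pointwise estimate $|G_\lambda(x)|\le|G_\lambda(0)|=\frac{1}{2\pi b}\bigl|\frac{\omega}{\sin\omega}\bigr|$, and a $\delta$-potential for sharpness. The one point of genuine divergence is the proof of the kernel estimate, which the paper isolates as Proposition~\ref{G_ineq}. There it is deduced from $|\cosh(\alpha x)|\le\cosh(x)$ for $0\le\re\alpha\le1$, proved via the Hadamard three-lines theorem applied to $g(\alpha)=\cosh(\alpha x)/\cosh(x)$, followed by the integral representation $\sinh(\alpha y)/(\alpha y)=\int_0^1\cosh(\alpha y t)\dd t$. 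You instead use the exact identity $|\sinh((\alpha+\ii\beta)u)|^2=\sinh^2(\alpha u)+\sin^2(\beta u)$ together with the convexity bound $\sinh(\alpha u)\le\frac{\alpha}{\pi}\sinh(\pi u)$ for $\alpha\in[0,\pi]$ and $|\sin(\beta u)|\le|\beta|u\le\frac{|\beta|}{\pi}\sinh(\pi u)$; squaring and adding gives $|\sinh(\omega u)|\le\frac{|\omega|}{\pi}\sinh(\pi u)$, which is exactly what is needed. Both arguments are valid; yours is more elementary (purely real-variable, no Phragm\'en--Lindel\"of input), while the paper's three-lines argument transfers more readily to situations where no clean modulus identity is available. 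Two small points: you are right that $\delta\notin L^1(\Bbb R)$, so sharpness of the constant requires either the approximation by concentrating $L^1$ potentials that you mention or treating $c\delta$ directly as a rank-one perturbation, which is what the paper does (with complex $c$, which is also the source of the resonance discussion in Section~5); and in the Birman--Schwinger step one should note that $\phi=|V|^{1/2}\psi\ne0$, since otherwise $V\psi=0$ and $(W_0(b)-\lambda)\psi=0$ would force $\psi=0$ --- this is routine but worth a sentence.
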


\medskip
\noindent
The study of different aspects of the spectrum of functional difference operators $W_V(b)$ was considered before. 
In the case when $-V =V_0= \e^{2\pi b x}$ is an exponential function, the operator $W_{V}(b)$ first appeared in the study of the quantum Liouville model on the lattice \cite{FT1} and plays an important role in the representation theory of the non-compact quantum group $\mathrm{SL}_{q}(2,\Bbb R)$. The spectral analysis of this operator was studied in \cite{FT2}. 
In the case when $-V = 2\cosh (2\pi b x)$ the spectrum of $W_{V}(b)$ is discrete and converges to $+\infty$. Its Weyl asymptotics were obtained in \cite{LSchTI}. This result was extended to a class of growing potentials in \cite{LSchTII}. More information on spectral properties of functional difference operators can be found in papers \cite{GHM}, \cite{GKMR}, \cite{KM}, \cite{KMZ}, \cite{T}.

%%%%%%%%%%%%%%%%%%%%%%%%%%

\bigskip
\setcounter{equation}{0}
\section{Resonance state}\label{Sec:2}

\noindent
We begin by proving that in the self-adjoint case the spectral point $2$ is the resonance state for the operator 
\eqref{W_V}. 

%---------------
\begin{theorem}\label{VSt}
Let $W_V$ defined in \eqref{W_V} be a self-adjoint, semi-bounded operator such that $V\ge0$, $V\not\equiv 0$, $V\in L^1(\Bbb R)$. Then $W_V$ has at least one eigenvalue below the spectral point $2$. 
\end{theorem}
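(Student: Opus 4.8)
The plan is to use the variational (min--max) principle. We are told $W_V$ is self-adjoint and semi-bounded, and the argument recalled in the introduction (for real $V\ge 0$) shows its essential spectrum is $[2,\infty)$. Hence it suffices to exhibit a single trial function $\psi$ in the form domain for which $\sclp{\psi, W_V\psi} < 2\norm{\psi}^2$: the bottom of the form is then strictly below $2=\inf\spec_{\mathrm{ess}}(W_V)$ and must be attained at a discrete eigenvalue. Writing the shifted form in Fourier space, for admissible $\psi$ one has
\begin{equation*}
\sclp{\psi,(W_V-2)\psi} = \int_{\Bbb R}\bigl(2\cosh(2\pi bk)-2\bigr)|\widehat{\psi}(k)|^2\dd k - \int_{\Bbb R}V(x)|\psi(x)|^2\dd x .
\end{equation*}
Because the symbol $2\cosh(2\pi bk)-2$ vanishes only quadratically at $k=0$ (cf.~\eqref{WH}), the threshold $2$ behaves like a virtual level, and the guiding heuristic is that in one dimension an arbitrarily weak attractive potential binds a state; I would realise this by concentrating the trial function near $k=0$ in Fourier space.

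Concretely, I would fix a nonzero nonnegative $f\in C_c^\infty(\Bbb R)$ and, for $\epsilon>0$, set $\widehat{\psi_\epsilon}(k)=\epsilon^{-1/2}f(k/\epsilon)$, so that $\psi_\epsilon(x)=\epsilon^{1/2}(\mathcal{F}^{-1}f)(\epsilon x)$ and $\norm{\psi_\epsilon}^2=\norm{f}^2$ is independent of $\epsilon$. Since $\widehat{\psi_\epsilon}$ has compact support and $\psi_\epsilon\in L^\infty$ while $V\in L^1$, this $\psi_\epsilon$ indeed lies in the form domain. Using $2\cosh t-2 = t^2\,g(t)$ with $g(t)\to 1$ as $t\to 0$ and $g$ bounded on the shrinking support of $\widehat{\psi_\epsilon}$, a change of variables $k=\epsilon u$ yields the kinetic estimate
\begin{equation*}
\int_{\Bbb R}\bigl(2\cosh(2\pi bk)-2\bigr)|\widehat{\psi_\epsilon}(k)|^2\dd k = (1+o(1))\,(2\pi b)^2\epsilon^2\int_{\Bbb R}u^2|f(u)|^2\dd u = O(\epsilon^2).
\end{equation*}

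For the potential term I would write $|\psi_\epsilon(x)|^2=\epsilon\,|(\mathcal{F}^{-1}f)(\epsilon x)|^2$ and apply dominated convergence (legitimate since $V\in L^1$ and $\mathcal{F}^{-1}f$ is bounded and continuous), obtaining
\begin{equation*}
\int_{\Bbb R}V(x)|\psi_\epsilon(x)|^2\dd x = \epsilon\,c\,(1+o(1)), \qquad c=\Bigl|\int_{\Bbb R}f\Bigr|^2\,\int_{\Bbb R}V(x)\dd x>0,
\end{equation*}
where positivity of $c$ uses $V\ge 0$, $V\not\equiv 0$ and $\int f\neq 0$. Combining the two estimates gives $\sclp{\psi_\epsilon,(W_V-2)\psi_\epsilon}=O(\epsilon^2)-c\,\epsilon\,(1+o(1))$, which is strictly negative for all sufficiently small $\epsilon>0$; by the variational principle this produces an eigenvalue of $W_V$ strictly below $2$, as claimed.

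The main obstacle is the borderline, critical nature of the one-dimensional threshold. Since the kinetic symbol degenerates only quadratically at $k=0$, both the kinetic and the potential energy of the spreading trial function tend to $0$, and the whole conclusion rests on the fact that they vanish at the genuinely different rates $\epsilon^2$ and $\epsilon$. Securing this two-scale comparison—in particular controlling the full $\cosh$, rather than merely its quadratic approximation, uniformly on the support of $\widehat{\psi_\epsilon}$—is the only delicate point; the remaining steps are routine.
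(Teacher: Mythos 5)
Your proof is correct and takes essentially the same route as the paper: the paper uses the spreading Gaussians $u_n(x)=\e^{-x^2/n^2}$ (concentrating at $k=0$ in Fourier space) and shows the kinetic term $\int(2\cosh(2\pi bk)-2)|\widehat{u}_n|^2\dd k$ vanishes faster than the potential term, then invokes the variational principle, exactly as you do with your rescaled bump $\widehat{\psi_\epsilon}(k)=\epsilon^{-1/2}f(k/\epsilon)$. The only differences are cosmetic (choice of profile and normalization), and your two-scale comparison $O(\epsilon^2)$ versus $c\,\epsilon$ is the same mechanism the paper exploits.
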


%-----------
\begin{remark}
It is well known that for a one-dimensional Schr\"odinger operator $-d^2/dx^2 -V$, $V\ge0$, $V\not\equiv 0$, there is always at least one negative eigenvalue. Since we have the strict inequality $W_0 - 2 > -d^2/dx^2$, Theorem \ref{VSt} cannot be obtained directly from the mentioned result for Schr\"odinger operators.
\end{remark}

\begin{proof}
For the proof we consider the sequence of test functions 
$$
u_n(x) = \e^{- \frac{x^2}{n^2}} \in {\rm dom}(W_V), \quad x\in\Bbb R.
$$
Clearly for any fixed $x\in \Bbb R$ we have $u_n \to 1$ as $n\to\infty$. Applying the Fourier transform we obtain
$$
\widehat{u}_n(k) = (\mathcal F u_n) (k) = \int_{\Bbb R} \e^{-2\pi \ii k x} \e^{- \frac{x^2}{n^2}} \dd x 
=\sqrt\pi \,n\, \e^{-\pi^2 n^2 k^2}
$$
and hence
\begin{align*}
((W_V - 2)  u_n, u_n) &= \int_{\Bbb R}  \left((W_0-2) u_n\right) \, \overline{u_n} \, dx - \int_{\Bbb R} V |u_n|^2 \dd x \\
&= 
\sqrt\pi \,n\, \int_{\Bbb R} \left(2\cosh(2\pi b k)-2\right) \e^{-2\pi^2 n^2 k^2}\, dk - \int_{\Bbb R} V |u_n|^2 \dd x .
\end{align*}
\noindent
Since 
$$
n \, \int_{\Bbb R} \left(2\cosh(2\pi b k)-2\right) \e^{-2\pi^2 n^2 k^2}\dd k \to 0, \quad {\rm as} \quad n\to \infty,
$$
we have that there is $n_0$ such that for any $n>n_0$ 
$$
((W_V - 2)  u_n, u_n) < 0.
$$
Applying the variational principle we complete the proof.
\end{proof}

%%%%%%%%%%%%%%%%%%%

\medskip
\setcounter{equation}{0}
\section{Free resolvent}\label{Sec:3}

\noindent
Since the spectrum $\sigma(W_0(b)) =[2,\infty)$ we conclude that $W_0(b)-\lambda$ is an invertible operator for $\lambda\in\Bbb C\setminus [2,\infty)$.  
Let  as before
$\lambda=-2\cos(\omega)$ with $\omega\in\Omega$. Then in Fourier space the inverse of $W_0(b)-\lambda$ is given by the multiplication operator $(2\cosh(2\pi b k)+2\cos(\omega))^{-1}$. 

\medskip
\noindent
Applying the inverse Fourier transform $\mathcal{F}^{-1}$ to $(2\cosh(2\pi b k)+2\cos(\omega))^{-1}$ we find the kernel of the free resolvent $G_\lambda=(W_0(b)-\lambda)^{-1}$ that is 
\begin{equation}\label{rezolvent}
G_\lambda(x,y)  = G_\lambda(x-y)=\frac{1}{2b\sin(\omega)}\, \frac{\sinh \left(\frac{\omega}{b} (x-y)\right)}{\sinh \left(\frac{\pi}{b} (x-y)\right)}.
\end{equation}
In the derivation of this identity using Contour integration, it is essential that $0\le\re\omega<\pi$. If $\omega$ had for example been chosen such that $\pi\le\re\omega<2\pi$,  the factor $\omega$ in \eqref{rezolvent} would have to be replaced by $\omega-2\pi$, guaranteeing again an exponential decay.  
\begin{remark}
Note that $G_\lambda(x-y)$ is an even and positive kernel for $\omega\in[0,\pi)$ and it becomes oscillating if 
$\omega\in i (-\infty,\infty)$. 
\end{remark}

\medskip
\noindent
The value of $G_\lambda$ on the diagonal $x=y$ takes the form
\begin{align}\label{G0}
G_\lambda(0)=\frac{1}{2\pi b}\frac{\omega}{\sin(\omega)}\,
\end{align}
and we can see the relation between the right-hand side of \eqref{G0} and  the expression in the left-hand sides of inequalities \eqref{main_real} and \eqref{main}.
Due to our parameterisation of the spectral parameter,  the convergence $\lambda\to 2$ in $\Bbb C\setminus[0,\infty)$ implies $\omega\to \pi$ in $\Omega$ and thus 
\begin{align*}
G_\lambda(0) \sim \frac{1}{2b} \, \frac{1}{\sqrt{1-\cos^2\omega}} \sim  \frac{1}{2b} \,\frac{1}{\sqrt{2-\lambda}},
\quad {\rm as} \quad \lambda\to 2.
\end{align*}
If $|\lambda|\to\infty$, then $|\im\omega| \to  \infty$ and 
\begin{align*}
|G_\lambda(0)| \sim \frac{1}{\pi b} |\lambda|^{-1} \log |\lambda|.
\end{align*}
%
%------------------

\begin{proposition} \label{G_ineq}
For any $\lambda \in \Bbb C\setminus [2,\infty)$ we have
\begin{equation}\label{G_inequality}
|G_\lambda (x)|\le |G_\lambda(0)|, \quad \forall x\in \Bbb R.
\end{equation}
\end{proposition}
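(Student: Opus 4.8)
The plan is to work directly with the explicit formulas \eqref{rezolvent} and \eqref{G0}. Since the prefactor $\frac{1}{2b\sin(\omega)}$ appears in both $G_\lambda(x)$ and $G_\lambda(0)$, dividing the desired inequality \eqref{G_inequality} by $\frac{1}{2b|\sin(\omega)|}$ reduces the claim to the scalar inequality
$$\pi\,\bigl|\sinh(\omega t)\bigr| \le |\omega|\,\bigl|\sinh(\pi t)\bigr|, \qquad t = \tfrac{x}{b}\in\Bbb R.$$
Both sides are even in $t$, so it suffices to treat $t\ge 0$, where $\sinh(\pi t)\ge 0$.

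The key step is to compute the modulus of $\sinh(\omega t)$ explicitly. Writing $\omega = \alpha + \ii\beta$ with $\alpha = \re\omega\in[0,\pi)$ and $\beta = \im\omega\in\Bbb R$, the addition formula for $\sinh$ together with $\cosh^2 = 1+\sinh^2$ yields the identity
$$\bigl|\sinh(\omega t)\bigr|^2 = \sinh^2(\alpha t) + \sin^2(\beta t).$$
Since $|\omega|^2 = \alpha^2 + \beta^2$, the inequality to be proved becomes equivalent to
$$\pi^2\bigl(\sinh^2(\alpha t) + \sin^2(\beta t)\bigr) \le (\alpha^2+\beta^2)\,\sinh^2(\pi t).$$

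I would then establish this by splitting it into two term-by-term estimates. For the first term, the function $s\mapsto \sinh(st)/s$ is nondecreasing on $(0,\infty)$ for each fixed $t\ge 0$ — visible from its power series $\sum_{m\ge0} s^{2m}t^{2m+1}/(2m+1)!$, each summand of which is nondecreasing in $s$ — so from $0\le\alpha<\pi$ one gets $\sinh(\alpha t)/\alpha \le \sinh(\pi t)/\pi$, i.e. $\pi^2\sinh^2(\alpha t)\le \alpha^2\sinh^2(\pi t)$ (the limiting case $\alpha=0$ being $\pi t\le\sinh(\pi t)$). For the second term, the elementary bounds $|\sin u|\le|u|$ and $\pi t\le \sinh(\pi t)$ give $\pi\,|\sin(\beta t)| \le \pi|\beta|\,t \le |\beta|\,\sinh(\pi t)$, that is $\pi^2\sin^2(\beta t)\le\beta^2\sinh^2(\pi t)$. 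Adding the two inequalities produces the claim.

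The computation is elementary throughout; the one genuinely useful observation — and the step I expect to carry the proof — is the clean identity $|\sinh(\omega t)|^2 = \sinh^2(\alpha t)+\sin^2(\beta t)$, which decouples the real and imaginary parts of $\omega$ and allows the two contributions to be estimated separately. The hypothesis $\re\omega<\pi$ enters precisely through the monotonicity bound on the first term, so it is essential that $\omega$ lies in $\Omega$.
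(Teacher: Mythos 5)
Your proof is correct, and it takes a genuinely different route from the paper's. Both arguments reduce the claim to the scalar inequality $\pi\,|\sinh(\omega x/b)|\le|\omega|\,|\sinh(\pi x/b)|$, but from there the paper proceeds by "soft" complex analysis: it proves $|\cosh(\alpha x)|\le\cosh(x)$ for $0\le\re\alpha\le1$ via the Hadamard three-lines theorem (checking the bound on the two boundary lines of the strip), and then transfers this to $\sinh$ through the integral representation $\sinh(\alpha y)/(\alpha y)=\int_0^1\cosh(\alpha y t)\,\mathrm{d}t$. You instead exploit the exact Pythagorean-type identity $|\sinh((\alpha+\ii\beta)t)|^2=\sinh^2(\alpha t)+\sin^2(\beta t)$, which decouples the real and imaginary parts of $\omega$ and lets you match the two summands against $\alpha^2\sinh^2(\pi t)/\pi^2$ and $\beta^2\sinh^2(\pi t)/\pi^2$ separately; the first estimate uses the monotonicity of $s\mapsto\sinh(st)/s$ (where $\re\omega\le\pi$ enters) and the second uses only $|\sin u|\le|u|$ and $u\le\sinh u$. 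Your computations check out, including the modulus identity and both term-by-term bounds. What each approach buys: yours is entirely elementary and makes visible exactly how $|\omega|^2=\alpha^2+\beta^2$ is consumed, whereas the paper's Phragm\'en--Lindel\"of argument is the kind of template that survives in situations where no clean closed form for the modulus is available. One small point common to both proofs: the case $\omega=0$ (i.e.\ $\lambda=-2$) is degenerate in the formula \eqref{rezolvent} and is handled by continuity; your scalar inequality holds there trivially.
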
 
%-----------------------

\begin{proof}
In order to prove \eqref{G_inequality} it is enough to show 
$$
\left|\frac{\sinh \left(\frac{\omega}{b} x\right)}{\sinh \left(\frac{\pi}{b} x\right)}\right|
\le \frac{|\omega|}{\pi},
$$
where $\omega\in\Omega$ as defined in \eqref{Omeg}.
We first prove that for any $\alpha\in \C$ with $0\le\re \alpha\le 1$ and any $x\in\Bbb R$
\begin{align}
|\cosh(\alpha x)|\le \cosh(x)\,.
\label{eq:cosh}
\end{align}
It suffices to consider  $x\ge0$. We define the holomorphic function $g(\alpha)=\cosh(\alpha x)/\cosh(x)$ on the strip $0<\re \alpha< 1$. Clearly it has a continuous extension to $\re\alpha=0$ and $\re\alpha=1$. On these boundaries it holds that $|g(\alpha)|\le 1$ since for any $t\in\Bbb R$ 
\begin{align*}
|g(0+\ii t)|=\frac{|\cosh(i t x)|}{\cosh(x)}=\frac{|\cos(t x)|}{\cosh(x)}\le 1
\end{align*}
and
\begin{align*}
|g(1+\ii t)|^2%=\frac{|\cosh(x+\ii t x)|^2}{\cosh^2(x)}
&=\frac{|\cosh(x)\cos(tx)+\ii\sinh(x)\sin(tx)|^2}{\cosh^2(x)}\\
&=\cos^2(tx)+\tanh^2(x)\sin^2(tx)\le 1\,.
\end{align*}
On the interior $0<\re \alpha< 1$ the function is furthermore bounded
\begin{align*}
|g(\alpha)|=\frac{|\e^{\alpha x}+\e^{-\alpha x}|}{\e^{x}+\e^{-x}}
=\e^{(\re \alpha-1)x}\frac{|1+\e^{-2\alpha x}|}{1+\e^{-2 x}}
\le 1+\e^{-2\re\alpha x}\le 2\,.
\end{align*}
By the Hadamard three-lines theorem (or the Phragm{\'e}n--Linedl{\"o}f principle on vertical strips), we have that $|g(\alpha)|\le 1$ for all $\alpha$ with $0\le\re \alpha\le 1$, which proves \eqref{eq:cosh}.

As a consequence for any such $\alpha\neq0$ and any $y\in\Bbb R\setminus\{0\}$
\begin{align*}
\left|\frac{\sinh(\alpha y)}{\alpha y}\right|
=\left|\int_0^1\cosh(\alpha y t)\dd t\right|
&\le \int_0^1|\cosh(\alpha y t)|\dd t\\
&\le \int_0^1\cosh(y t)\dd t=\frac{\sinh(y)}{y}=\left|\frac{\sinh(y)}{y}\right|\,.
\end{align*}
Applying this result with $\alpha=\omega/\pi$  and $y=\pi x/b$ we obtain that 
\begin{align*}
\left|\frac{\sinh(\frac{\omega}{b} x)}{\omega x}\right|
\le \left| \frac{\sinh(\frac{\pi}{b}x)}{\pi x}\right|
\end{align*}
for all $\omega\neq0$ with $0\le\re\omega\le \pi$ and all $x\in \Bbb R\setminus\{0\}$. Rearranging yields the desired
result and the proof is complete.
\end{proof}

\medskip
\noindent
Note that in \cite{FT2} L.~Faddeev and L.~A.~Takhtajan studied the resolvent in a slightly different form 
\begin{align*}
G_\lambda(x-y) = \frac{\sigma}{\sinh (\frac{\pi \ii \varkappa}{\sigma}) }\, \left( \frac{\e^{-2\pi \ii \varkappa(x-y)}}{1-\e^{- 4 \pi \ii \sigma (x-y)}} + 
\frac{\e^{2\pi \ii \varkappa(x-y)}}{1-\e^{4 \pi \ii \sigma (x-y)}} \right)
\end{align*}
which coincides with \eqref{rezolvent} with $\sigma = \ii/2b$, $\lambda = 2\cosh(2b\pi\varkappa)$  and $\varkappa = \frac{\omega - \pi}{2\pi \ii b}$.

\medskip
\noindent
It was also pointed out in \cite{FT2} that the free resolvent can be written using the analogues of the Jost solutions 
\begin{align*}
f_-(x,\varkappa) = \e^{-2\pi \ii \varkappa x} \quad {\rm and} \quad f_+(x,\varkappa) = \e^{2\pi \ii \varkappa x}
\end{align*}
that appear in the theory of one-dimensional Schr{\"o}dinger operators. Namely
\begin{align*}
G_\lambda(x-y) = \frac{2\sigma}{C(f_-,f_+)(\varkappa)} \, 
\left(\frac{f_-(x, \varkappa) f_+(y, \varkappa)}{1 - \e^{\frac{\pi \ii}{\sigma'} (x-y)}}  + 
\frac{f_-(y, \varkappa) f_+(x, \varkappa)}{1 - \e^{-\frac{\pi \ii}{\sigma'} (x-y)}} \right),
\end{align*}
where $\sigma'  \sigma = -1/4$ and where $C(f,g)$ is the so-called Casorati determinant (a difference analogue of the Wronskian) of the solutions of the functional-difference equation
\begin{align*}
C(f,g)(x,\varkappa) = f(x+ 2\sigma', \varkappa)g(x,\varkappa) - f(x,\varkappa) g(x + 2\sigma', \varkappa).
\end{align*}
For the Jost solutions we have $C(f_-,f_+)(x,\varkappa) = 2\sinh(\frac{2\pi \kappa}{\sigma})$.

%%%%%%%%%%%%%%%%%%%%%%%%%%%%%% 

\medskip
\setcounter{equation}{0}
\section{Proof of Theorem \ref{1}}\label{Sec:4}

\noindent
Let $V\in L^1(\Bbb R)$ be a complex-valued function and assume that 
\begin{equation}\label{eigenv}
(W_V(b)\psi)(x)=\psi(x+\ii b)+\psi(x-\ii b)-V(x)\psi(x) = \lambda \psi(x)\,.
\end{equation}
Let 
\begin{equation}\label{XY}
X = |V|^{1/2} \quad {\rm and} \quad  Y = V |V|^{-1/2}.
\end{equation}
Then the Birman--Schwinger principle states that the operator 
$Y G_\lambda X$ has an eigenvalue 1 and hence its operator norm is greater or equal to 1. Using \eqref{rezolvent} we find that the integral kernel of this operator equals
$$
Y(x) \frac{1}{2b\sin(\omega)}\, \frac{\sinh \left(\frac{\omega}{b} (x-y)\right)}{\sinh \left(\frac{\pi}{b} (x-y)\right)} X(y)
$$
and hence using Proposition \ref{G_ineq} we obtain
\begin{multline*}
\left| \left( \psi, Y G_\lambda X \varphi\right)\right| \le
\sup_{x\in\Bbb R} \left|G_\lambda(x)\right| \,\|V\|_1 \, \|\psi\|_2 \,\|\varphi\|_2\\
\le
 \left|G_\lambda(0)\right| \|V\|_1 \, \|\psi\|_2 \,\|\varphi\|_2
=  \left|\frac{1}{2\pi b} \, \frac{\omega}{\sin(\omega)}\right| \, \|V\|_1 \, \|\psi\|_2 \,\|\varphi\|_2.
\end{multline*}
Thus 
\begin{equation*}\label{1.1}
\left| \frac{\sin(\omega)}{\omega}\right| \le \frac{1}{2\pi b} \int_{\Bbb R} |V(x)|\, dx
\end{equation*}
and this proves \eqref{main}.

\medskip
\noindent
In order to prove that the constant in the inequality  \eqref{main} is sharp we consider the potential 
$V_c(x) = c \delta(x)$, where $\delta$ is the Dirac $\delta$-function and $c\in \Bbb C\setminus [0,\infty)$. The potential $V_c$ is a rank one perturbation of the ``free" operator $W_0(b)$. 
In Fourier space the eigenequation becomes
\begin{equation}\label{1rank_op}
2\cosh(2\pi k )\,\widehat\psi_c (k) - c\psi_c(0) = \lambda \widehat\psi_c(k).
\end{equation}
Denoting as before $\lambda = -2\cos(\omega)$, $\omega\in\Omega$, we obtain
$$
\widehat\psi_c(k) = \frac{c\psi_c(0)}{2\cosh(2\pi k) + 2 cos(\omega)}.
$$
Therefore 
\begin{equation}\label{eigenfunc1}
\psi_c(x) = c \psi_c(0) G_{-2\cos(\omega)} (x) = \frac{c\psi_c(0)}{2b\sin(\omega)}\,\,  
\frac{\sinh\left(\frac{\omega}{b} x \right)}{\sinh\left(\frac{\pi}{b} x \right)}.
\end{equation}
Letting $x\to 0$ in the last identity we find 
$$
1= \frac{c}{2b\sin(\omega)}\,\, \frac{\omega}{\pi} 
$$
and since $c=\int V_c\, dx$ we conclude that 
\begin{equation*}
\frac{\sin(\omega)}{\omega} = \frac{1}{2\pi b} \int_{\Bbb R} V_c(x) \, dx.
\end{equation*}
The proof of Theorem \ref{1} is complete.

%%%%%%%%%%%%%%%%%%%%%%%%%%%%%%%%%%

\setcounter{equation}{0}
\section{Examples}\label{Sec:4}

\medskip
\noindent
Let us consider the equation 
$$
W_0(b) u(x) - c\delta(x)  u(x) = \lambda u(x),
$$
where $c= r e^{i\vartheta}$ with $r>0$ and $\vartheta\in[0,2\pi)$. For simplicity we assume that $b=1$. 
Then the eigenfunction \eqref{eigenfunc1} becomes
\begin{equation}\label{eigenfunc2}
\psi_c(x) = \frac{c\psi_c(0)}{2\sin(\omega)}\,\,  
\frac{\sinh\left(\omega \,x \right)}{\sinh\left(\pi \,x \right)},
\end{equation}
and $\psi_c$ is in $L^2(\Bbb R)$ for $\re\omega\in[0,\pi)$, where it is also an analytic function of $\omega$.
However, this function has singularities on the complex line $\omega= \pi + it$, $t\in \Bbb R$, and is exponentially growing if $\re\omega>\pi$. Therefore the equation 
\begin{equation}\label{1rank_eq}
\frac{\sin(\omega)}{\omega} =   \frac{r }{2\pi}\, e^{i\vartheta}
\end{equation}
defines the eigenvalues $\lambda = -2\cos(\omega)$ only under the assumption ${\rm Re}\, \omega\in [0,\pi)$.  However the equation \eqref{1rank_eq} can be solved even for ${\rm Re}\, \omega>\pi$ and thus gives infinitely many solutions \eqref{eigenfunc2} to the corresponding eigenequation that are not in $L^2(\Bbb R)$. It is natural to identify the latter values of $\lambda$ with resonances.  

\noindent
Below we present graphs for three different coupling constants  
$r/2\pi$,  namely $r/2\pi = 2, 0.25$ and $ 0.2$. We plot the solutions $\omega$ of \eqref{1rank_eq} for $\vartheta\in[0,2\pi)$ with 
$$
{\rm Re}\,\omega\in[0,\pi), \quad {\rm Re}\,\omega\in[\pi, 2\pi) \quad {\rm and} \quad {\rm Re}\,\omega\in[2\pi, 3\pi).
$$
In each of the plots we highlight the solutions obtained for $\vartheta=\frac{k\pi}{4}$ where $k=0,\dots,7$. We also plot the corresponding values $-2\cos(\omega)$. The complex eigenvalues are given by only the violet curves and the blue and green curves are resonances. In particular, in all three cases we note the absence of a complex eigenvalue if $\vartheta$ is sufficiently close to $\pi$. 

\vspace{5cm}

\begin{figure}[!htbp]
\centerline{\includegraphics[scale=.37]{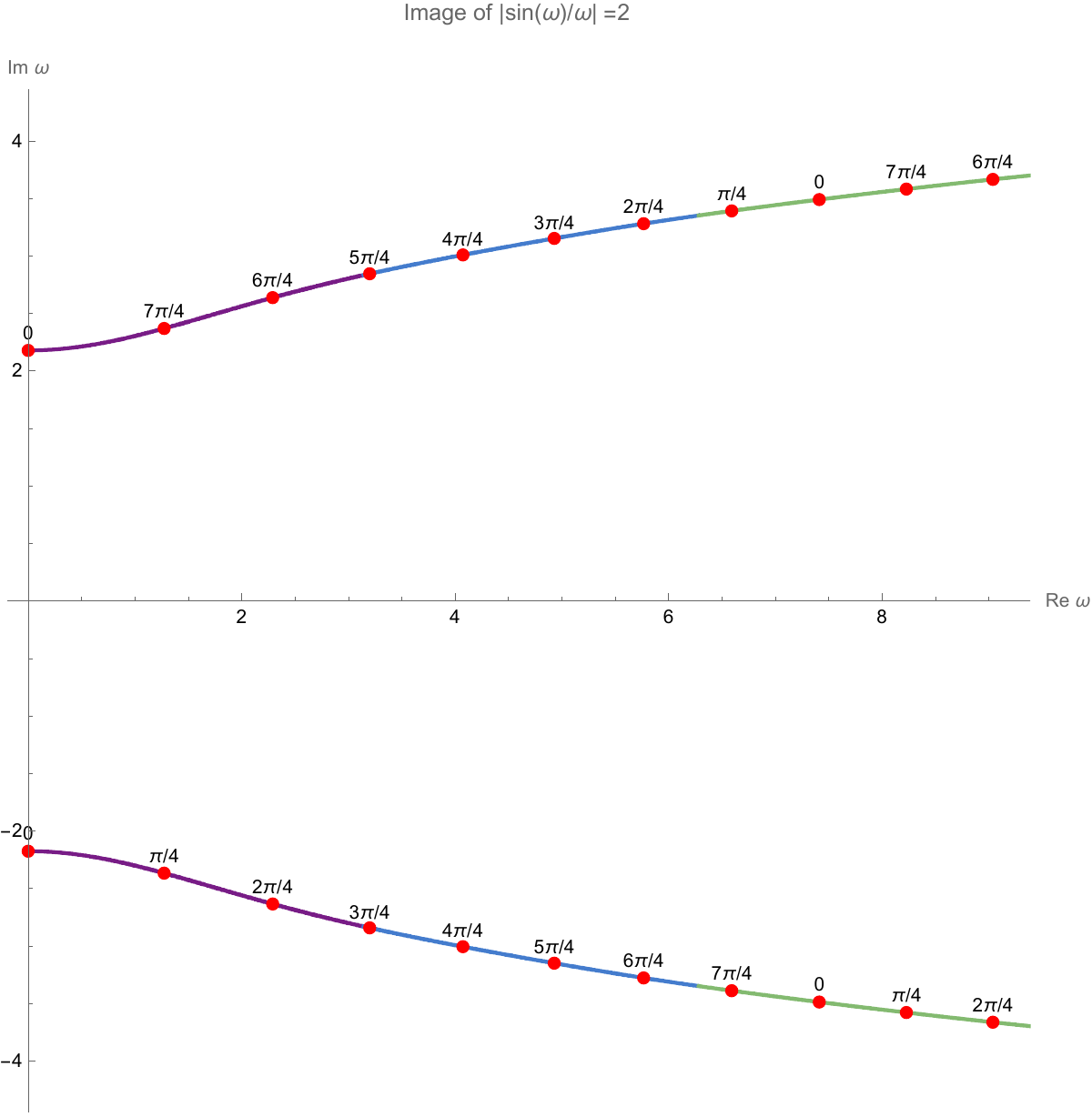}\quad   \includegraphics[scale=.41]{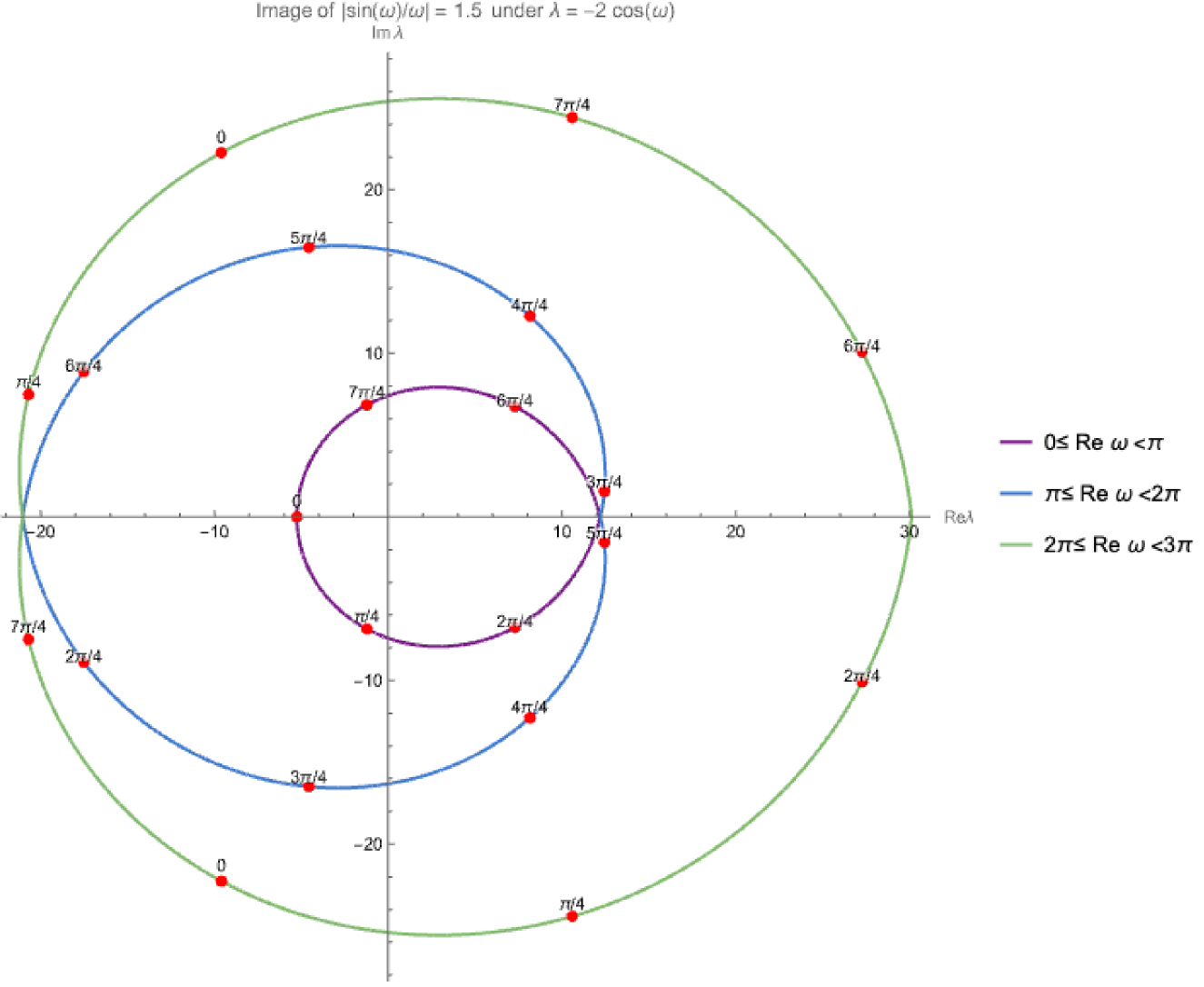}}
\caption{The solutions $\omega$ and $-2\cos\omega$ for $r/2\pi=2$.}
\end{figure}
\begin{figure}
\centerline{\includegraphics[scale=.37]{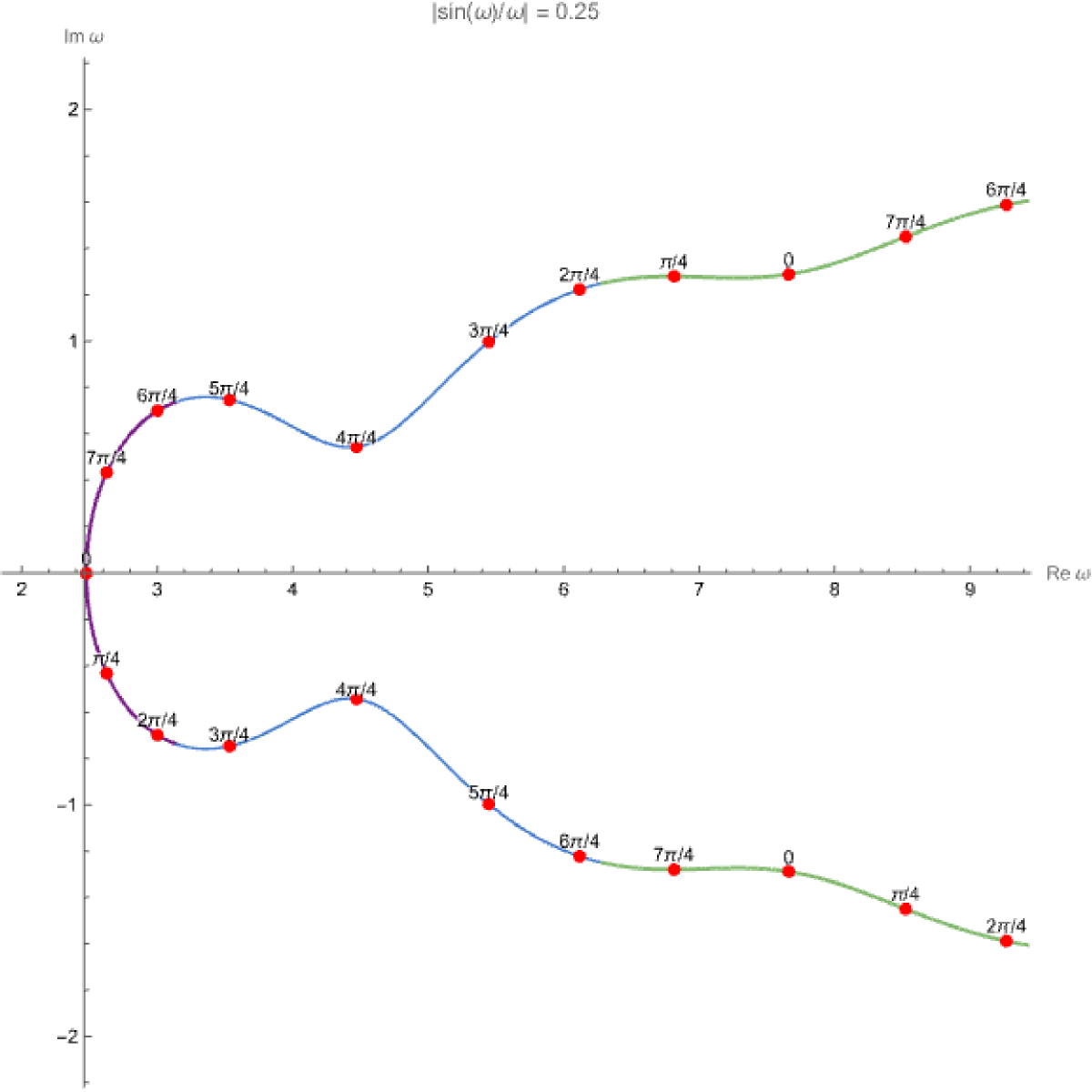}\quad   \includegraphics[scale=.41]{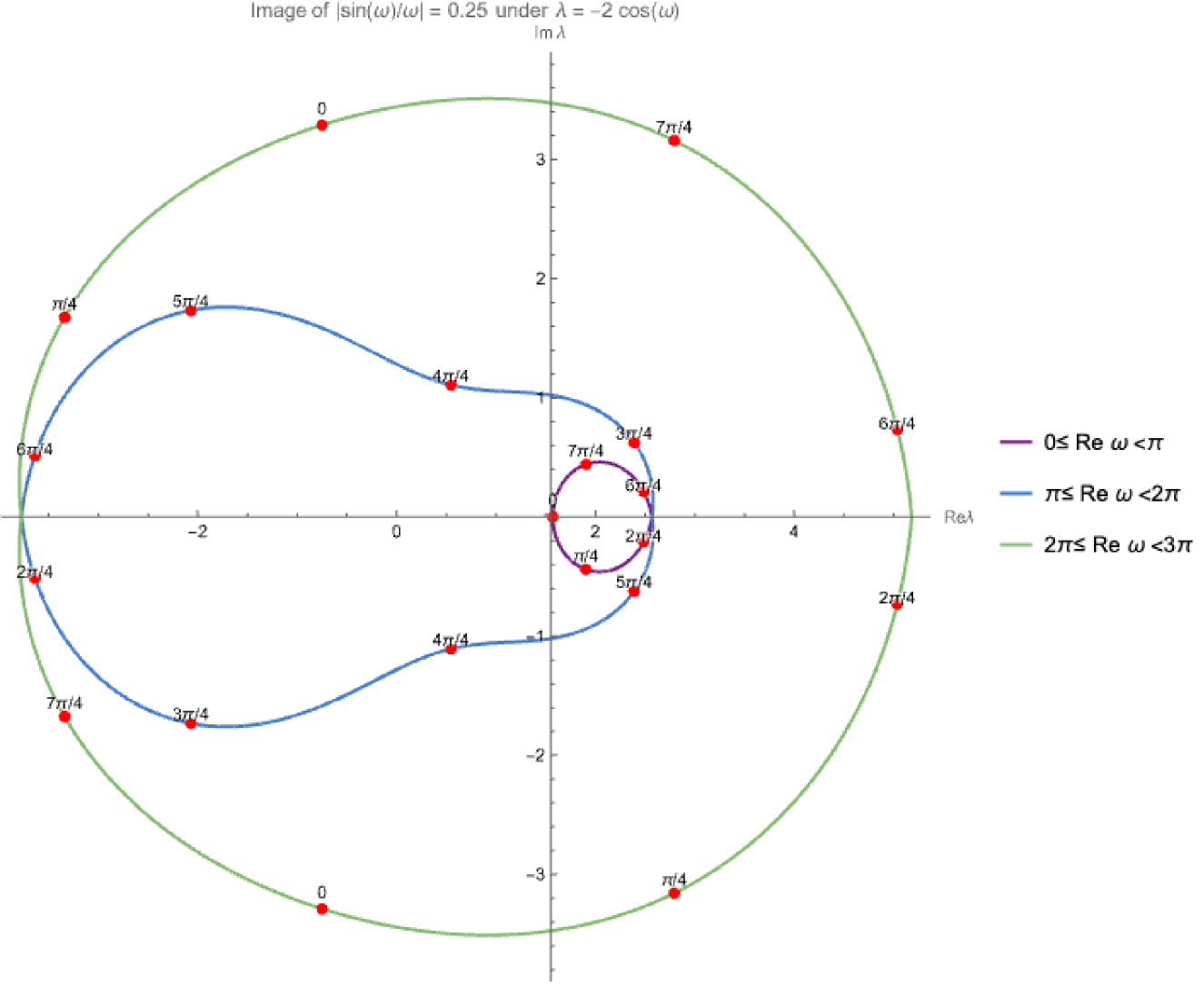}}
\caption{The solutions $\omega$ and $-2\cos\omega$ for $r/2\pi=0.25$.}
\end{figure}
\begin{figure}
\centerline{\includegraphics[scale=.37]{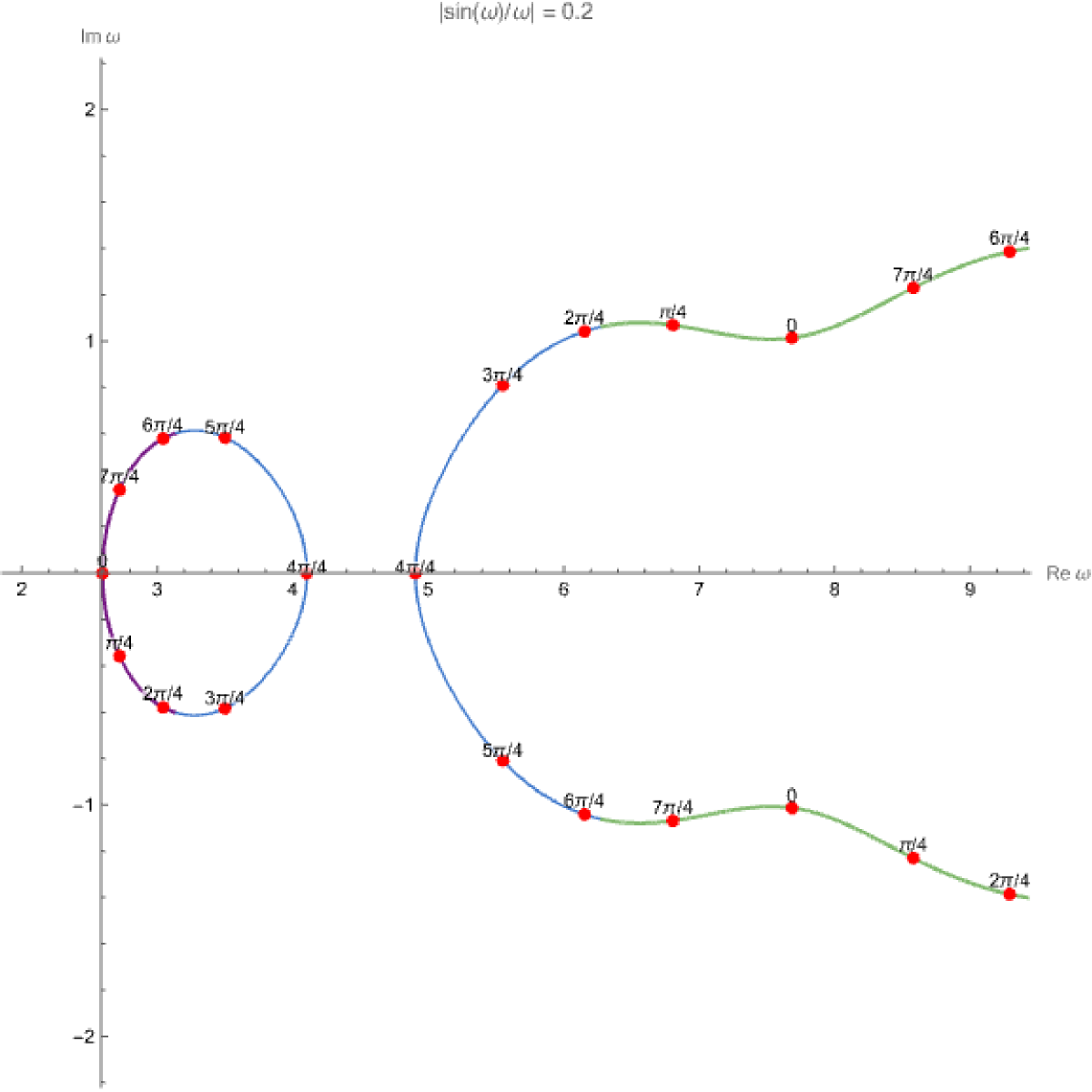}\quad   \includegraphics[scale=.41]{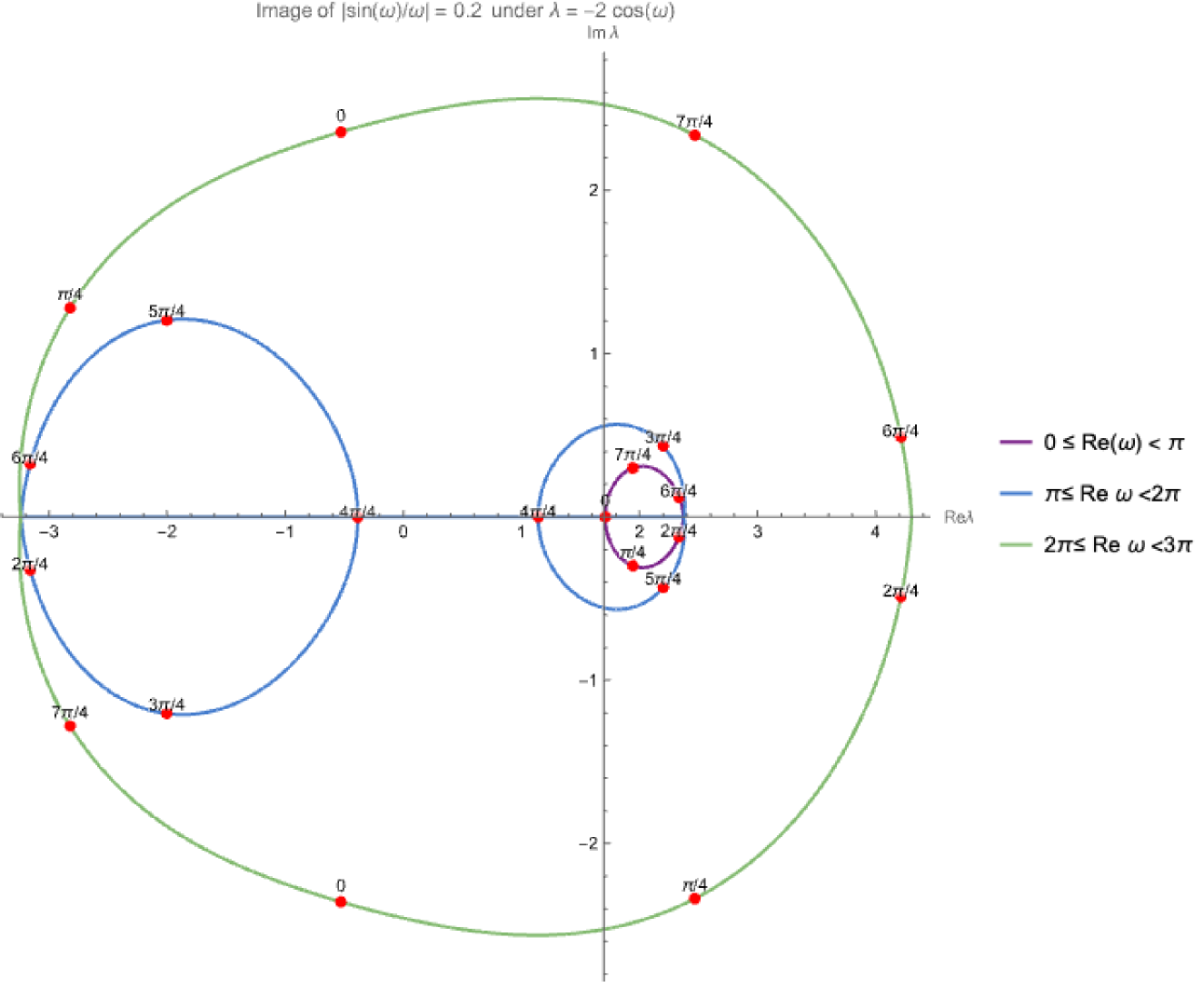}}
\caption{The solutions $\omega$ and $-2\cos\omega$ for $r/2\pi=0.2$.}
\end{figure}

%%%%%%%%%%%%%%%%%%%%%%%%%%%%%%%

\clearpage

\end{document}